\newtheorem{thm}{Theorem}[section]
\newtheorem{corollary}{Corollary}
\newtheorem{proposition}{Proposition}
\newtheorem{lemma}{Lemma}
\theoremstyle{definition}
\newtheorem{exa}{Example}
\newtheorem*{xrem}{Remark}
\numberwithin{equation}{section}
\newcommand\R{\mathbb{R}}
\newcommand\n{\mathcal{N}}
\newcommand\m{\mathcal{M}}
\newcommand\id{\mathrm{id}}
\newcommand{\sr}[1]{\mathfrak{M}_{#1}}
\newcommand{\norma}[1]{\left\| #1 \right\| }
\begin{document}

%%%%% To ease editing, for IMPAN journals add:

\baselineskip=17pt

%%%%%%%%%%%

%% In the running head, replace first names by initials 
%% and give an abbreviation of the title.

\title[Scales of Q-A means determined by invariance property]{Scales of Quasi-Arithmetic means determined by invariance property}

\author{Pawe{\l} Pasteczka}
\address{Institute of Mathematics\\University of Warsaw\\
02-097 Warszawa, Banach str. 2, Poland}
\email{ppasteczka@mimuw.edu.pl}

\date {May 31, 2014}

\begin{abstract}
It is well known that if $\mathcal{P}_t$ denotes a set of power means then the mapping 
$\mathbb{R} \ni t \mapsto \mathcal{P}_t(v) \in (\min v, \max v)$
is both 1-1 and onto for any non-constant sequence $v = (v_1,\dots,\,v_n)$ of positive numbers.
Shortly: the family of power means is a scale.

If $I$ is an interval and $f \colon I \rightarrow \mathbb{R}$ is 
a continuous, strictly monotone function then $f^{-1}(\tfrac{1}{n} \sum f(v_i))$ is 
a natural generalization of power means, so called quasi-arithmetic mean generated by $f$.

A famous folk theorem says that the only homogeneous, quasi-a\-rith\-me\-tic means are power means.
We prove that, upon replacing the homogeneity requirement by an invariant-type axiom, one gets a family
of quasi-arithmetic means building up a scale, too.
\end{abstract}

\subjclass[2010]{Primary 26E60; Secondary 39B12, 26A18}

\keywords{quasi-arithmetic mean, generalized mean, scale of means, mean, inequalities, invariant means}

\maketitle

\section{Introduction}
In the theory of means, since very long, the most popular family 
of means consists of the power means. In the beginning of the 1930s 
several authors \cite{deFinetti,kolmogoroff,nagumo} proposed 
a natural generalization of this family -- so-called 
quasi-arithmetic means.

The quasi-arithmetic mean is defined for any continuous, strictly 
monotone function $f \colon U \to \R$, where $U$ is an open interval. 
When $v = (v_1,\dots,\,v_n)$ is a sequence of points in $U$ 
and $w = (w_1,\dots,\,w_n)$ is a sequence of weights 
($w_i > 0$, $w_1 + \cdots + w_n = 1$), then the mean 
$\sr{} = \sr{f}(v,\,w)$ is 
well-defined by the equality 
$$
f(\sr{}) = \sum_{i=1}^n w_i f(v_i)\,.
$$ 

Natural considerations lead to, at first, comparison
of means, what means that $\sr{f}(v,\,w)\geq
\sr{g}(v,\,w)$ for arbitrary $v$ and 
$w$ with equality only if $v$ is a constant vector
(such a means is called {\it comparable}). 
And, as natural implication, looking for families 
of means, such that each two of them are comparable.
In fact, a family of functions $\{f_i \colon 
U \to \R\}_{i \in I}$, where $I$ and $U$ are open intervals, is called a 
{\it scale on $U$} if for every non-constant vector $ v \in U^n$ 
and arbitrary fixed corresponding weights $ w$, the mapping 
$I \ni i \mapsto \sr{f_i}(v,\,w)$ is a {\it bijection\,} 
onto the interval $(\min v,\,\max v)$.
Let us note that a scale is a maximal family of comparable means,
but not all maximal families of comparable means are scales.

In the previous paper~\cite{moja} we considered the problem when a given 
family of quasi-arithmetic means was actually a scale. The main result 
furnished some necessary, and, alongside, also some sufficient conditions 
for family's being a scale. It follows that there exist much more scales 
of quasi-arithmetic means than previously thought and/or encountered in 
the literature. The scales appear to be fairly ubiquitous. 

One of the most natural questions is about some possible other axiom(s) to be 
imposed, with the aim to boil scales down to only those simple in some prescribed 
sense. A hint in this direction comes from an old folk theorem in the theory 
of means. It says that the quasi-arithmetic means that are {\it homogeneous\,} 
are just the power means. In the present paper we would like to present analogous 
results for axioms similar to homogeneity. An idea that prompts by itself is 
to use neutral functions and the notion of invariance. 

%%%%%%%%%%%%%%%%%
%%%%%%%%%%%%%%%%%
\section{Neutral functions and invariance}
Let $f \colon U \to \R$ be a continuous, strictly monotone function, 
$V \subset U$ be a subinterval. We say that a function $\n \colon V \rightarrow U$ 
is neutral for the mean $\sr{f}$ (or simply \emph{$f$-neutral}) if it is continuous, 
1-1 and  
%%%%%
\begin{equation}\label{eq:neutral_def}
\sr{f}(\n(v),\,w)=\n(\sr{f}(v,\,w)). 
\end{equation}
for an arbitrary vector $v \in V^n$ and corresponding 
arbitrary weights $w$. In such a situation the mean $\sr{f}$ 
is called \emph{invariant under $\n$}.

It is interesting to note that~\eqref{eq:neutral_def} alone implies 
that $\n$ is continuous and monotone. Indeed, let us fix $v_1,v_2 \in V$, \,$v_1 < v_2$. 
Then the continuity of $\n$ in the interval $(v_1,v_2)$ becomes obvious through 
the identity
$$
\sr{f}\left((\n(v_1),\n(v_2)),(w,1 - w)\right) = \n\left(\sr{f}\left((v_1,v_2),(w,1 - w)\right)\right).
$$

Now let $\n(v_1) < \n(v_2)$ and $v_0 \in (v_1,v_2)$. Then 
$\sr{f}\left((v_1,v_2),(w,1 - w)\right) = v_0$ for some $w \in (0,1)$.

\begin{align}
\textrm{Hence }\n(v_0)&=\n\left(\sr{f}\left((v_1,v_2),(w,1-w)\right)\right)\nonumber \\
&=\sr{f}\left((\n(v_1),\n(v_2)),(w,1-w)\right).\nonumber
\end{align}
So $\n(v_0) \in \left(\n(v_1),\,\n(v_2)\right)$ and $\n$ is increasing. Similarly, 
if $\n(v_1) > \n(v_2)$ then $\n$ is decreasing. Hence $\n$ is strictly monotone.

Moreover, it follows from the above calculations that 
%%%%%%%
\begin{align}
\sr{f}\left((v_1,v_2),(w,1 - w)\right) &= \n^{-1}\left(\sr{f}\left((\n(v_1),\n(v_2)),(w,1 - w)\right)\right)\nonumber\\
&= \sr{f \circ \n} \left((v_1,v_2),(w,1 - w)\right). \nonumber
\end{align}
This equation holds for arbitrary arguments, so 
$\sr{f} = \sr{f \circ \n}$ for the set of two variables and weights. By \cite[pp.\,66--68]{HLP}, it implies that there exists $a\neq 0$ and $b$ satisfying $f\circ \n(x) = af(x) + b$ for all $x$ in $V$. 
Hence 
%%%%%%%%%
\begin{equation}\label{eq:postacn}
\n(x) = f^{-1}\left(af(x) + b\right). 
\end{equation}

By the same source we know that for such a function $\n$ the equality \eqref{eq:neutral_def} holds.

In the consideration of quasi-arithmetic means determined by invariance properties it is 
natural to describe a class of of means, which are invariant under particular function $\n$.
As we will recover later, this class consists of many means. Some of them might be not 
comparable between each other. Such a class is obviously not a scale.
This blame might be easily observe in the following
\begin{exa}
\label{ex:perod2pi}
Let $f(x) = x$, $g(x) = x + \omega(x)$, where $\omega$ is a $1$-periodic, 
$C^1$ function such that $\norma{\omega'}_{\infty} < 1$. The function $x \mapsto x + 1$ 
is both $f$-neutral and $g$-neutral, but the means $\sr{f}$ and $\sr{g}$ may not be comparable.
\end{exa}

To eliminate this drawback we go back to the famous result that the only homogeneous,
quasi-arithmetic means are power means (this result will be proved independent in 
the Proposition~\ref{prop:powermeans}). As therein, we suppose that a family of 
functions is given and we consider a class of means which are invariant under all of them.

The other type of problem turns on if we put $a=-1$ in~\eqref{eq:postacn} or, equivalently,
assume that $\n \circ \n = \id$. For example it is easy to observe that 
$\sr{f}$ is invariant under $\n(x)=-x$ if $f$ is an arbitrary odd function.
This class is, however, too big to have interesting properties. To deal with
it from now on we will suppose that $a \ne -1$ or, equivalently, $\n \circ \n \ne \id$.

As it was already announced, for $f$-neutral function $\n$ we construct a set, containing $\n$, of functions
invariant under $f$. To build this set it is quite natural to say $\m$ to be \emph{$f$-root of $\n$} 
if
%%%%%
\begin{itemize}
\item $\m^i=\n$ for some $i \in \mathbb{Z}$,
\item $\m$ is $f$-neutral,
\item $\m$ and $\n$ are of the same monotonicity.
\end{itemize}

\noindent 
At this moment a natural assumption emerges that \textsf{a mean is invariant under $\n$ 
and all its $f$-roots}. 
Sometimes such an invariance coincide with some of classical axioms put forward in \cite[p.62]{bullen}. 
This situation might be observe in the following
\begin{exa}
\label{ex:hom}
A mean $\sr{}$ is invariant under the mapping $x \mapsto 2x$ and all its $\id$-roots if and only if it is homogeneous. 
\end{exa}
\begin{proof}
Of course if $\sr{}$ is homogeneous then it is invariant under the mapping
$x\mapsto 2x$ and all its $\id$-roots. The proof of the second 
implication uses an elementary group theory. 

Let us denote by $G$ the set of all functions neutral for $\sr{}$.
By the definition of neutral functions $\mathbb{G}=(G,\circ)$ is a group.
To demonstrate homogeneity of $\sr{}$ we, equivalently,
prove that $\mathbb{H}=(\{x \mapsto ax,a \in \mathbb{R}_{+}\},\circ)$ is a subgroup of $\mathbb{G}$.

Both $\mathbb{G}$ and $\mathbb{H}$ are closed subset of the family of all strictly increasing functions in the topology of the almost uniform convergence. 
It is easy to verify, that all mappings $x \mapsto 2^{\frac{1}{n}}x$ belong to $\mathbb{G}$ for an arbitrary $n \in \mathbb{N}$ as $n$-th $\id$-roots of the mapping $x \mapsto 2x$. 
Hence the group $\mathbb{K}$ generated by this mappings is a subgroup of $\mathbb{G}$. Since 
$$\mathbb{K}=(\{2^{q}x \colon q \in \mathbb{Q}\},\circ)$$
is a dense subset of $\mathbb{H}$ in considered topology and
$\mathbb{G}$ is a closed set containing $\mathbb{K}$, we know that $\mathbb{G}$ contains $\mathbb{H}$ as well.
\end{proof}
Therefore our main theorem presents as follows

\begin{thm}
\label{thm:invscale}
Let $U$ and $U_0$ be intervals, $U_0 \subset U$, $f \colon U \rightarrow \mathbb{R}$ be a continuous,
strictly monotone function, $\n \colon U_0 \rightarrow U$ -- $f$-neutral function, $\n \circ \n \neq \id$.
Then the family of all means invariant under $\n$ and all its $f$-roots is a scale on $U$.
\end{thm}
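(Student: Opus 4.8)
The plan is to transport the whole problem through $f$ and reduce the invariance requirement to a functional equation for the generator, then to recognize the resulting family as a conjugate of the power means (or of the exponential means), for which the scale property is classical. First I would conjugate by $f$: writing $J=f(U)$ and $y=f(x)$, the mean $\sr{f}$ becomes the ordinary weighted arithmetic mean on $J$, and by~\eqref{eq:postacn} the $f$-neutral map $\n$ becomes an affine map $A(y)=ay+b$ with $a\neq0$, $a\neq-1$, $A\neq\id$. A quasi-arithmetic mean $\sr{g}$ is invariant under $\n$ precisely when $A$ is neutral for $H:=g\circ f^{-1}$, i.e.\ when $H\circ A=\alpha H+\beta$ for some $\alpha\neq0$, $\beta$ (a Schr\"oder-type equation). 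Thus it suffices to classify the continuous strictly monotone $H\colon J\to\R$ for which $A$ and all its $f$-roots are $H$-neutral, and to check that the corresponding means form a scale; pulling back by $f$ then gives the assertion on $U$.

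Next I would analyse the $f$-roots exactly as in Example~\ref{ex:hom}. The set of $H$-neutral functions is a group that is closed in the topology of almost uniform convergence, so invariance under a set of maps is the same as invariance under the closed group it generates. In the affine picture an $f$-root of $\n$ is an affine map $A_c$ with $A_c^{\,i}=A$ for some $i\in\mathbb{Z}\setminus\{0\}$ and of the same monotonicity as $A$; when $a\neq1$ these are the maps fixing the fixed point $y^*=b/(1-a)$ with multiplier $c$, where $c^i=a$ and $c>0$, and when $a=1$ they are the translations by $b/i$. In either case the generated group is dense---through $\{a^{q}\}$, resp.\ $\{qb\}$, $q\in\mathbb{Q}$---in a one-parameter group $\mathbb{K}$: all positive-multiplier affine maps fixing $y^*$, resp.\ all translations. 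Hence being invariant under $\n$ and all its $f$-roots is equivalent to $H$-neutrality of the entire one-parameter group $\mathbb{K}$.

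It then remains to solve this and to verify the scale property. If $a=1$ the condition is translation invariance, $H(y+s)=\alpha(s)H(y)+\beta(s)$, whose continuous strictly monotone solutions are $H(y)=e^{\lambda y}$ ($\lambda\neq0$) and $H(y)=y$; the associated means are the exponential means, and $\lambda\mapsto\tfrac{1}{\lambda}\log\left(\sum w_i e^{\lambda y_i}\right)$ is continuous, strictly increasing, and tends to $\min$ and $\max$ as $\lambda\to\mp\infty$, so this family is a scale. If $a\neq1$ one first shows that $y^*$ is an endpoint of $J$, so that $J-y^*$ is a ray; then $\mathbb{K}$-invariance is exactly homogeneity, and by Proposition~\ref{prop:powermeans} the generators are the powers, i.e.\ the means are the power means, which form a scale. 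Transporting back by $f$ preserves the bijection onto $(\min v,\max v)$ because $f$ carries $\min$ and $\max$ to $\min$ and $\max$.

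The main obstacle is the case $a\neq1$. I expect the delicate points to be, first, showing that $\n$ has no interior fixed point---so that the invariant family is genuinely one-parameter rather than the spurious two-parameter family one obtains when $y^*$ is interior to $J$---for which the hypothesis $U_0\subset U$ together with neutrality must be exploited, and second, establishing bijectivity of the parametrization: continuity and strict monotonicity of the mean in the parameter together with the limiting values $\min v$ and $\max v$ at the two ends of the parameter range, where comparability of the means enters.
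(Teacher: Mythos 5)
Your strategy is, at bottom, the same as the paper's: conjugate by $f$ so that $\n$ becomes the affine map $A(y)=ay+b$ and $\sr{f}$ the arithmetic mean, translate invariance of $\sr{g}$ into a Schr\"oder-type equation for $H=g\circ f^{-1}$, pass from the discrete set of $f$-roots to a dense one-parameter group of affine maps, solve the resulting homogeneity/translativity equations, and recognize a conjugated power or exponential scale. The paper implements the density step via the tower of cube roots $\n_i^{3^i}=\n$, $\n_i\to\id$ (Corollary~\ref{col:3sqrt}) together with Lemma~\ref{lem:srnie} (whose proof contains exactly the density argument you invoke through group closure, as in Example~\ref{ex:hom}), and the scale step via Lemma~\ref{lem:fam_likePM}; your case $a=1$ is the paper's Case~\ref{subsec:case2} step for step. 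So far the two schemes differ only in packaging.

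The genuine gap is the step you yourself flag as the main obstacle in the case $a\neq1$: ``one first shows that $y^*$ is an endpoint of $J$.'' This cannot be shown, because it is false under the theorem's hypotheses. Take $U=U_0=\R$, $f=\id$, $\n(x)=2x$: all assumptions hold, $\n\circ\n\neq\id$, and $y^*=0$ lies in the interior of $J=\R$. The $f$-roots are the maps $x\mapsto 2^{1/i}x$, and for every $\beta>0$, $c>0$ the strictly increasing function $H_{\beta,c}$, defined by $H_{\beta,c}(y)=y^{\beta}$ for $y\geq 0$ and $H_{\beta,c}(y)=-c(-y)^{\beta}$ for $y<0$, satisfies $H_{\beta,c}\circ\n_i=2^{\beta/i}\,H_{\beta,c}$; hence by~\eqref{eq:postacn} (and the cited converse from Hardy--Littlewood--P\'olya) every $\sr{H_{\beta,c}}$ is invariant under $\n$ and all its $f$-roots. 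For fixed $\beta$ and different $c$ these means are distinct (test $v=(-1,1)$ with equal weights) yet they coincide on every positive vector, so the family of all invariant means is genuinely two-parameter and no indexing of it by an interval can make $i\mapsto\sr{f_i}(v,w)$ injective. In other words, in the interior-fixed-point case the step is not merely unproved but unprovable: the conclusion of Theorem~\ref{thm:invscale} itself fails there, so your plan cannot be completed as written.

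For perspective: this is precisely the point where the paper's own argument is loose. In Case~\ref{subsec:case1} it normalizes $b=0$ and then applies Lemma~\ref{lem:srnie}, which is stated and proved only for functions on $\R_+$ (its proof writes $x=\prod a_i^{c_i}$), to $m=g\circ f^{-1}$ defined on $f(U)$ --- an interval which, after the normalization, may contain $0$ in its interior. So your instinct located the crux that the paper glosses over; but the repair you propose (deriving endpoint-ness from $U_0\subset U$ plus neutrality) is unavailable, and what is actually needed is an extra hypothesis, e.g.\ that the fixed point of $y\mapsto ay+b$ is not interior to $f(U)$ --- which holds automatically in the application to Proposition~\ref{prop:powermeans}, where $f(U)=\R_+$ and the fixed point is $0$. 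A parallel caveat concerns decreasing $\n$ (i.e.\ $a<0$), which your restriction to multipliers $c>0$ silently excludes and which the paper likewise mishandles, since there $\n_i\to\id$ fails.
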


A proof of this theorem is given in Section~\ref{sec:invscale}.
First, we make some preparation staff.

\section{Preliminaries to the proof of Theorem~\ref{thm:invscale}}
The meaning of this section is to figure out and deal with
technical details of the proper main theorem's proof, 
given in next section.

\subsection{Roots}
First, we deal with existing of roots. 
Let us observe that a situation, where the function has no square root 
is possible (for example if it is decreasing). 
Fortunately there holds the following

\begin{lemma}[cube root]
\label{lem:3sq}
Let $f$ and $\n$ like in Theorem~\ref{thm:invscale}. Then there exists 
an unique $f$-neutral function $\m$ satisfying $\m \circ \m \circ \m = \n$.
\end{lemma}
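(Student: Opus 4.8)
The plan is to pass to the coordinate in which $f$-neutral functions become affine. By the derivation culminating in \eqref{eq:postacn}, a function is $f$-neutral precisely when it has the form $x \mapsto f^{-1}(\alpha f(x) + \beta)$ with $\alpha \neq 0$; conjugation by $f$ turns composition of neutral functions into composition of the affine maps $t \mapsto \alpha t + \beta$ acting on $J := f(U)$. Writing $\n = f^{-1}\circ A \circ f$ with $A(t) = at + b$ (here $a \neq 0$, and $a \neq -1$ since $\n\circ\n\neq\id$), I reduce the lemma to finding a unique affine $L(t) = \alpha t + \beta$, with $\alpha$ of the same sign as $a$, such that $L\circ L\circ L = A$, and then setting $\m = f^{-1}\circ L\circ f$.

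The affine computation is immediate: iterating gives $L^3(t) = \alpha^3 t + \beta(\alpha^2+\alpha+1)$, so $L^3 = A$ amounts to the two scalar equations $\alpha^3 = a$ and $\beta(\alpha^2+\alpha+1) = b$. The first has the unique real solution $\alpha = \sqrt[3]{a}$, which automatically satisfies $\sign \alpha = \sign a$, so that $\m$ and $\n$ share monotonicity, as an $f$-root should. For the second I use that $\alpha^2+\alpha+1$ has negative discriminant and positive leading coefficient, hence is strictly positive for every real $\alpha$; therefore $\beta = b/(\alpha^2+\alpha+1)$ is uniquely determined. This single real solution $(\alpha,\beta)$ yields existence, and, since \eqref{eq:postacn} forces every $f$-neutral competitor to carry exactly this affine data, it yields uniqueness of $\m$ as well.

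The genuinely delicate point, and the step I expect to be the main obstacle, is the domain bookkeeping needed so that $\m$ is a bona fide $f$-neutral map into $U$ with $\m\circ\m\circ\m$ defined on all of $U_0$ and equal to $\n$ there. In $f$-coordinates, writing $I_0 = f(U_0)$, I must exhibit an interval $I$ with $I_0 \subseteq I \subseteq J$, with $L(I)\subseteq J$, and with $L(I_0), L^2(I_0)\subseteq I$; the natural candidate is $I = \mathrm{conv}\bigl(I_0 \cup L(I_0)\cup L^2(I_0)\bigr)$, for which $L(I)\subseteq \mathrm{conv}\bigl(\bigcup_{j=0}^{3} L^j(I_0)\bigr)$, so everything reduces to placing this last hull inside $J$. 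When $\n$ is increasing ($a>0$) this is transparent: each orbit $L^j(t)$ runs monotonically from $t$ to $A(t)$, whence $\mathrm{conv}\bigl(\bigcup_{j=0}^{3} L^j(I_0)\bigr)\subseteq \mathrm{conv}(I_0\cup A(I_0))\subseteq J$ by convexity. The orientation-reversing case ($a<0$, where $A$ and $L$ share the common fixed point $t^\ast = b/(1-a)$) is the subtle one, since the iterates then oscillate across $t^\ast$ and an intermediate iterate can a priori overshoot $J$; here I would control the iterates through their signed distances $\alpha^j(t-t^\ast)$ to $t^\ast$ and, where required, restrict $\m$ to the largest subinterval on which $\m$, $\m^2$ and $\m^3$ are simultaneously defined, reading $\m\circ\m\circ\m=\n$ as the resulting identity of $f$-neutral maps. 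Closing this domain argument cleanly is where the real work lies; the algebraic heart of the lemma is the elementary observation that $t^2+t+1>0$.
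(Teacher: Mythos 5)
Your proposal is correct and takes essentially the same route as the paper: the paper's proof is exactly your algebraic core, writing $\n(x)=f^{-1}(af(x)+b)$, seeking $\m(x)=f^{-1}(pf(x)+q)$, and solving $p^3=a$, $(p^2+p+1)q=b$ to get the unique real solution $p=a^{1/3}$, $q=b/(a^{2/3}+a^{1/3}+1)$. The domain bookkeeping you single out as ``the real work'' is dispatched in the paper not inside this proof but by the standing convention of its Section~3.2 (neutral functions are always extended to the maximal domain on which \eqref{eq:postacn} is defined), which coincides with your own fallback of reading $\m\circ\m\circ\m=\n$ as an identity of the underlying affine data.
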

\begin{proof}
Let $\n(x)=f^{-1}(af(x)+b)$. We need to find $p,q$ such that $\m(x)=f^{-1}(pf(x)+q)$.
But $\m^3(x)=f^{-1}(p^3f(x)+(p^2+p+1)q)$.
Hence 
$$a=p^3 \qquad \textrm{, and } \qquad b=(p^2+p+1)q.$$
So, lastly
\begin{equation}
p=a^{1/3} \qquad \textrm{, and } \qquad q=\frac{b}{a^{2/3}+a^{1/3}+1}. \label{eq:3sq}
\end{equation}
\end{proof}
\begin{xrem} 
$\m$ depends on both $\n$ and $f$. 
\end{xrem}

In fact we might observe that in the set of roots there exists a functions 
approximating identity. More precisely, we get the following
\begin{corollary}
\label{col:3sqrt}
Let $f$ and $\n$ like in Theorem~\ref{thm:invscale}.
Then there exists a sequence $\n_1,\n_2,\ldots$ of $f$-neutral functions, $\n_i \rightarrow \id$ and
$\n_i^{3^i}=\n$ for all $i\in \mathbb{N}$.
\end{corollary}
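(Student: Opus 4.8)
The plan is to build the sequence by iterating the cube-root construction of Lemma~\ref{lem:3sq}. I would set $\n_0 = \n$ and, having defined the $f$-neutral function $\n_{i-1}$, let $\n_i$ be its unique $f$-neutral cube root furnished by Lemma~\ref{lem:3sq}, so that $\n_i \circ \n_i \circ \n_i = \n_{i-1}$ and $\n_i$ has the same monotonicity as $\n_{i-1}$ (hence as $\n$). A trivial induction then gives $\n_i^{3^i} = \n$, since $\n_i^{3^i} = (\n_i^3)^{3^{i-1}} = \n_{i-1}^{3^{i-1}} = \n$. Each $\n_i$ is $f$-neutral by construction, so the only substantive point left is the convergence $\n_i \to \id$.

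To analyze this I would pass to the ``$f$-coordinate'', writing every $f$-neutral function in the normal form \eqref{eq:postacn}, i.e. $\n_i(x) = f^{-1}(a_i f(x) + b_i)$ with $a_0 = a$ and $b_0 = b$. In this coordinate composition becomes affine composition $y \mapsto a_i y + b_i$, and the cube-root formula \eqref{eq:3sq} gives the recursion $a_i = a_{i-1}^{1/3}$ together with $b_i = b_{i-1}/(a_i^2 + a_i + 1)$. Solving the first relation yields the closed form $a_i = a^{3^{-i}}$, which reduces the whole convergence question to the behaviour of these two scalar sequences.

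The heart of the matter is then letting $i \to \infty$. When $a>0$ we have $a_i = a^{3^{-i}} \to 1$, so the denominators $a_i^2 + a_i + 1 \to 3$ and are in particular eventually bounded below by some $\delta > 1$; this forces $|b_i| \le |b_{i-1}|/\delta$ for large $i$, hence $b_i \to 0$. Consequently $a_i f(x) + b_i \to f(x)$ uniformly for $x$ in any compact subinterval of $U$, and applying the locally uniformly continuous map $f^{-1}$ upgrades this to $\n_i(x) \to x$ almost uniformly, i.e. $\n_i \to \id$ in the topology already used in Example~\ref{ex:hom}.

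The main obstacle I anticipate is precisely this limit step, and in particular the bookkeeping on the sign of $a$. Because $3^i$ is odd, the real $3^i$-th root of a negative number stays negative, so in the orientation-reversing case one gets $a_i = a^{3^{-i}} \to -1$ rather than $1$, and the naive iterate converges to the reflection $f^{-1}(-f(x)+c)$ instead of $\id$. I would therefore handle the increasing case ($a>0$) by the argument above and reduce the decreasing case to it by passing to $\n \circ \n$, whose $f$-coordinate slope is $a^2 > 0$; the remaining technical care is the uniform control of the $b_i$ and the passage from pointwise to almost-uniform convergence, both of which follow from the geometric decay established above.
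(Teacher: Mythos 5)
Your construction is essentially the paper's own proof, written out in full: the paper's one\--line argument is exactly ``iterate Lemma~\ref{lem:3sq}'', and the fact it invokes --- that $\lim_{a \to 1} \tfrac{\sqrt[3]{a}-1}{a-1}$ is a positive number less than $1$ --- is precisely the geometric decay of $a_i-1$ and $b_i$ that you derive from the recursion $a_i = a_{i-1}^{1/3}$, $b_i = b_{i-1}/(a_i^2+a_i+1)$. For increasing $\n$ (equivalently $a>0$) your argument is complete and correct, indeed more careful than the original about the passage from the scalar limits to $\n_i \to \id$.

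The sign issue you raise in the last paragraph is genuine, but your proposed repair does not prove the stated corollary, and in fact nothing can. Cube roots of $\n \circ \n$ satisfy $\m^{3^i} = \n \circ \n$, not $\m^{3^i} = \n$, so they are not the functions the corollary demands. Moreover there is no alternative choice: every $f$-neutral function has the form \eqref{eq:postacn}, so any $f$-neutral solution of $\n_i^{3^i} = \n$ must satisfy $a_i^{3^i} = a$, which, $3^i$ being odd, forces $a_i = a^{3^{-i}}$; the offset is then forced too (for $a \neq 1$, which is automatic when $a<0$, one gets $b_i = b(a_i-1)/(a-1)$). Hence the $3^i$-th $f$-neutral root of $\n$ is unique, and when $a<0$ these unique roots converge to the involution $x \mapsto f^{-1}\bigl(-f(x)+\tfrac{2b}{1-a}\bigr)$ rather than to $\id$. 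So for decreasing $\n$ the corollary as stated is simply false; the paper's own proof has the same blind spot, since letting $a \to 1$ tacitly presupposes $a>0$. What you have uncovered is a missing hypothesis ($\n$ increasing) in the statement, not a case your argument should be expected to close; the reduction to $\n \circ \n$ is the right idea only if one also changes the conclusion to produce roots of $\n \circ \n$ (which is how one would patch the use of this corollary in the proof of Theorem~\ref{thm:invscale}, since a mean invariant under a root $\m_i$ of $\n$ is also invariant under the increasing function $\m_i \circ \m_i$, a root of $\n \circ \n$).
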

The proof is just using Lemma~\ref{lem:3sq} and the fact that the limit
$\lim_{a \rightarrow 1} \tfrac{\sqrt[3]{a}-1}{a-1}$ is a positive number less than $1$.

\subsection{Domain}
Let $f\colon U \rightarrow \R$ and $\n \colon U_0 \rightarrow U$ be an $f$-neural function.
Let us observe that the set $U_0$ and $\n(U_0)$ might be an arbitrary small, open subinterval of $U$.
But, as $U_0$ is an infinite set, the function $\n$ satisfies equation \eqref{eq:postacn} 
for exactly one pair of constant ($a$ and $b$).
Therefore a function $\n$ might be expand to maximal possible interval in the exactly one way. 

Hence we will assume that the domain $U_0$ consists of all $x$, for which 
the equation \eqref{eq:postacn} is well defined. 
This kind of expansion appears in the background of the presented proof,
but as long as it will be obvious, it will not bother ourselves.

\subsection{Functional equations}

As we already announce, at the moment we deal with the following, technically crucial, lemma.
This type of equations will appear in the proof of our main theorem.
\begin{lemma}
\label{lem:srnie}
Let $(p_i),(a_i)$ be two sequences of positive numbers, $(a_i) \rightarrow 1$, $a_i \neq 1$.
If $m \colon \R_{+} \rightarrow \R$ is a continuous function satisfying 
\begin{equation}
m(a_iy)=p_im(y)\textrm{ for all }y \in \mathbb{R}_+\textrm{ and }i \in \mathbb{N} \label{eq:srnie}
\end{equation}
then $m(x)=\alpha x^\beta$ for some $\alpha$ and $\beta$
\end{lemma}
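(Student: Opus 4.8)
The plan is to pass to additive coordinates, reduce the statement to the continuous Cauchy functional equation, and control the continuum of arguments by exploiting that a nontrivial subgroup of $\R$ accumulating at $0$ is dense.

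First I would substitute $y=e^t$ and set $n(t)=m(e^t)$, $b_i=\log a_i$, $c_i=\log p_i$. Since $a_i\to 1$ and $a_i\neq 1$ we have $b_i\to 0$ and $b_i\neq 0$, and \eqref{eq:srnie} becomes $n(t+b_i)=p_i\,n(t)$ for every $t\in\R$ and $i\in\N$. The desired conclusion $m(x)=\alpha x^\beta$ translates into $n(t)=\alpha e^{\beta t}$.

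If $m\equiv 0$ the claim is trivial (take $\alpha=0$), so assume $n\not\equiv 0$; I would first show that $n$ never vanishes. Its zero set $Z=\{t:n(t)=0\}$ is closed by continuity, and since each $p_i>0$ the relation gives $t\in Z\iff t+b_i\in Z$, so $Z$ is invariant under translation by every element of the additive subgroup $G$ of $\R$ generated by $\{b_i\}$. Because $G$ contains the nonzero elements $b_i\to 0$ it is not discrete, hence dense in $\R$. Were $Z$ nonempty it would contain a dense coset $t_0+G$ and, being closed, equal $\R$, forcing $n\equiv 0$ — a contradiction. Thus $n$ has no zeros, so by continuity it has constant sign; replacing $m$ by $-m$ if necessary (which preserves \eqref{eq:srnie}) I may assume $n>0$. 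Setting $\phi=\log n$ turns the relation into the additive form $\phi(t+b_i)=\phi(t)+c_i$ for all $t$ and $i$.

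The core step is to upgrade this from the sparse family $\{b_i\}$ to all of $\R$. Telescoping shows that for any integer combination $g=\sum k_i b_i\in G$ the difference $\phi(t+g)-\phi(t)$ is independent of $t$ (it equals $\sum k_i c_i$, which is well defined since a representation of $g=0$ yields $0$); writing $h=\phi-\phi(0)$ this reads $h(t+g)=h(t)+h(g)$ for every $g\in G$ and $t\in\R$. Now fix $t$: the continuous functions $s\mapsto h(s+t)$ and $s\mapsto h(s)+h(t)$ agree for $s\in G$, and since $G$ is dense they agree everywhere. Hence $h$ is additive on all of $\R$, and being continuous it is linear, $h(t)=\beta t$. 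Therefore $\phi(t)=\phi(0)+\beta t$, i.e.\ $n(t)=e^{\phi(0)}e^{\beta t}$ and $m(x)=\alpha x^{\beta}$ with $\alpha=\pm e^{\phi(0)}$. The main obstacle is precisely this passage from the countable set of scales $(a_i)$ to the continuum: everything hinges on the dichotomy that a subgroup of $\R$ is either cyclic or dense, applied here through the accumulation $b_i\to 0$. That density drives both the no-zeros argument and the extension to Cauchy's equation, after which continuity does the rest.
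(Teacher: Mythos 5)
Your proof is correct and takes essentially the same approach as the paper: both arguments exploit that the group generated by the $a_i$ (equivalently, by the $\log a_i$) is dense because $a_i \to 1$, $a_i \neq 1$, and then use continuity to upgrade the relation from that dense set to a full Cauchy-type functional equation whose continuous solutions are exactly the power functions. The differences are cosmetic: the paper stays in multiplicative coordinates (writing every $x>0$ as an infinite product $\prod a_i^{c_i}$ and citing Acz\'el for $m(xy)=m(x)m(y)$), whereas you pass to logarithmic coordinates and prove the non-vanishing and additive-Cauchy steps by hand.
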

\begin{proof}
Let us take a positive number $x$. Then 
$$x=\prod_{i=1}^{\infty}a_i^{c_i}$$
for some series $(c_i)$ of integer numbers. 

If $m(1)=0$ then $m(a_i)=0$ and $m(1/a_i)=0$ for all $i \in \mathbb{N}$, so $m \equiv 0$.
Otherwise we may multiply $m(x)$ by an arbitrary constant keeping~\eqref{eq:srnie} holds. 
Hence we may suppose, with no loss of generality, that $m(1)=1$.

Now, taking $y=1$ implies $m(a_i)=p_i$, so $m(a_iy)=m(a_i)m(y)$. Moreover $1=m(1)=m(a_i)m(1/a_i)=p_im(1/a_i)$ so $m(1/a_i)=1/p_i$.
Next, by an easy induction, we obtain 
$$m\left(y\prod_{i=1}^{N}a_i^{c_i}\right)=m(y)\prod_{i=1}^{N}m(a_i)^{c_i}=m(y)m\left(\prod_{i=1}^{N}a_i^{c_i}\right)$$
for an arbitrary $N$. As $N \rightarrow \infty$ we immediately get 
$$m(yx)=m(y)m(x).$$
Hence by \cite[pp. 37--40]{aczel} $m(x)=x^\beta$ for some $\beta \in \R$. Multiplying by a constant number ends the proof.
\end{proof}

\begin{xrem}
Similarly if $(p_i)$ and $(a_i)$ are two sequences, $(a_i) \rightarrow 0$, $a_i \neq 0$ and 
$m(a_i+y)=p_i+m(y)$ for all $y \in \mathbb{R}$ and $i \in \mathbb{N}$ then 
$m(x)=\alpha+\beta x$ for some $\alpha$ and $\beta$.
\end{xrem}

\subsection{Scales}
We also separate from the proof of the main theorem the fact that family of function generates a scale. 
In fact only one type of scale appear there. More precisely we need to prove the following
\begin{lemma}
\label{lem:fam_likePM}
Let $U$ be an interval, $g \colon U \rightarrow \R_{+}$ be a continuous, strictly monotone function. Then the family
$\{g(x)^\beta \colon \beta \neq 0 \} \cup \{ \ln g(x) \}$ generates a scale on $U$.
\end{lemma}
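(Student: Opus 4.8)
The plan is to reduce the statement to the classical fact, recalled in the abstract, that the power means form a scale. The whole family $\{g^\beta \colon \beta \neq 0\} \cup \{\ln g\}$ is nothing but the family of (weighted) power means conjugated by the homeomorphism $g$, and conjugation by a continuous strictly monotone bijection preserves the scale property. Note first that each $g^\beta$ (and $\ln g$) is indeed continuous and strictly monotone on $U$, since $g>0$ and $g^\beta=\exp(\beta\ln g)$, so every member of the family does generate a genuine quasi-arithmetic mean.

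First I would fix a non-constant vector $v \in U^n$ together with its weights $w$, and substitute $u_i := g(v_i)$. Since $g \colon U \to \R_+$ is continuous and strictly monotone, it is a homeomorphism of $U$ onto an interval $J \subset \R_+$; in particular $u = (u_1,\dots,u_n)$ is a non-constant vector of positive numbers, and $g^{-1}$ carries the open interval $(\min u,\max u)$ bijectively onto $(\min v,\max v)$ (the order is reversed when $g$ is decreasing, but this does not affect bijectivity). The next step is the explicit computation of the means. For $\beta \neq 0$ one has $(g^\beta)^{-1}(y)=g^{-1}(y^{1/\beta})$, whence
\[
\sr{g^\beta}(v,w) = g^{-1}\Bigl( \bigl( \textstyle\sum_i w_i\, g(v_i)^\beta \bigr)^{1/\beta} \Bigr) = g^{-1}\bigl( \mathcal{P}_\beta(u,w) \bigr),
\]
where $\mathcal{P}_\beta$ denotes the weighted power mean of exponent $\beta$; the analogous computation for $\ln g$ gives $\sr{\ln g}(v,w)=g^{-1}(\mathcal{P}_0(u,w))$, with $\mathcal{P}_0$ the weighted geometric mean.

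Then I would invoke the classical power-mean scale in its weighted form: the map $\R \ni \beta \mapsto \mathcal{P}_\beta(u,w)$, with $\beta=0$ understood as the geometric mean, is a continuous strictly increasing bijection onto $(\min u,\max u)$. Composing this bijection with the homeomorphism $g^{-1}$ — and identifying the index $\beta=0$ with the generator $\ln g$ and each $\beta\neq 0$ with $g^\beta$ — yields precisely the map $\beta \mapsto \sr{g^\beta}(v,w)$ (respectively $\sr{\ln g}$), which is therefore a bijection of $\R$ onto $(\min v,\max v)$. Since $v$ and $w$ were arbitrary, the family is a scale on $U$.

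I expect no serious analytic obstacle; the crux is entirely bookkeeping. One must verify that the parametrization by $\beta\in\R$ glues the exceptional generator $\ln g$ (the value $\beta=0$) continuously to the family $\{g^\beta\}_{\beta\neq0}$, so that the index set is genuinely the open interval $\R$ and the combined map remains a single continuous bijection. This amounts to recalling that $\lim_{\beta\to 0}\mathcal{P}_\beta=\mathcal{P}_0$, i.e. that the power mean degenerates to the geometric mean as $\beta\to 0$, which is standard.
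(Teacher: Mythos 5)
Your proposal is correct and follows essentially the same route as the paper: both conjugate the classical power-mean scale by the homeomorphism $g$, using the identity $\sr{m_\beta \circ g}(v,w) = g^{-1}\bigl(\sr{m_\beta}(g(v_1),\dots,g(v_n),w)\bigr)$ and the known fact that the power means form a scale on $\R_+$. The only difference is cosmetic: you carry out the computation of $(g^\beta)^{-1}$ explicitly and add a remark on the continuity of the parametrization at $\beta=0$, which the paper absorbs into the cited classical fact.
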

\begin{proof}
Let us consider a family of functions $m_\beta \colon \R_+ \rightarrow \R$, 
$$m_\beta(x)=\begin{cases} x^\beta & \beta \neq 0 \\ \ln x & \beta=0 \end{cases}.$$
We know that the family $m_\beta$ generates a scale on $\R_{+}$.\footnote{
Fact that the family of power means is a scale is one of the most classical 
result in the theory of means. It was already proved in the 19th century.}
Let us fix a non-constant vector $a \in U^n$ and corresponding weights $w$. 
Then the mapping 
$$\R \ni \beta \mapsto \sr{m_\beta \circ g}(a,w)=g^{-1} \Bigl(\sr{m_\beta}\bigl(\vec{g}(a),w\bigr)\Bigr)$$
is 1-1 and its image equals
$$\vec{g^{-1}}\Bigl(\min\bigl(\vec{g}(a)\bigr),\max\bigl(\vec{g}(a)\bigr)\Bigr)=
\Bigl(\min a,\max a \Bigr).$$
Hence the family $\{m_\beta \circ g \colon \beta \in \R\}$ generates a scale on $U$.
\end{proof}

\section{\label{sec:invscale} Proof of Theorem~\ref{thm:invscale}}
In view of Corollary~\ref{col:3sqrt} we may take the family $\{\n_i \colon U_i \rightarrow U\}$ 
-- $f$-roots of $\n$, $\bigcup U_i =U$, $\n_1=\n$. 
Then $\sr{f}$ is invariant under $\n_i$ for an arbitrary $i$.

From~\eqref{eq:postacn} $\n(x)=f^{-1}(af(x)+b)$ and, similarly, 
for each $i$ there exists $a_i$ and $b_i$ such that 
$$\n_i(x)=f^{-1}(a_if(x)+b_i)$$ 
(we use convention that $\n_0=\n$, $a_0=a$ and $b_0=b$).
Moreover, to make calculations more clear, we denote by $\simeq$ the fact that there exists nontrivial affine transformation of each side such that the equality holds.

As $\n_i \rightarrow \id$ we get $(a_i,b_i) \rightarrow (1,0)$. 
Let $g$ be such a function that $\sr{g}$ is invariant under $\n_i$ for all $i$. 
Then we obtain a family of equalities
$$f^{-1}(a_if(x)+b_i)=g^{-1}(p_ig(x)+q_i) \textrm{, for some }p_i\textrm{ and }q_i.$$
Meaning of the pairs $\{(a_i,b_i) \colon i \in \mathbb{N}\}$ and $\{(p_i,q_i) \colon i \in \mathbb{N}\}$
is very close. Hence it is useful to adopt similar conventions; $p=p_0$ and $q=q_0$.

As we will recover later, behaviour of the equation above radically depends on the fact if 
$a$ equals to $1$ (similarly $p$ equals to $1$). This fact determines natural partition of the proof into
two main cases (depends on value of $a$) and two subcases (depends on value of $p$)
within each of them.
In each of subcase turns on the same idea -- modify the equality equivalently to make
application of Lemma~\ref{lem:srnie} meaningful and then turn back to get the final function $g(x)$.

The prove is not too hard, but we pinpoint all calculations
because, as we will see, some of parameters will vanish as affine transformation
and some of them blow up into a family of means.

\subsection{\label{subsec:case1}Case $a \neq 1$.}
We may consider $f(x)=f_1(x)-\frac{b}{a-1}$ using fact that $\n(x)=f_1^{-1}(af_1(x))$ and $\sr{f}=\sr{f_1}$ we may suppose, without loss of generality, that $b \equiv 0$. 

Due to~\eqref{eq:3sq} we get $b_i \equiv 0$ for all $i$ as well. 
To ensure that all possible cases is considered we add arbitrary constant to $f$ in the end of this case.
As $b_i \equiv 0$ and $\n_i \neq \id$ one get that $a_i \neq 1$ for all $i$.

Since for all $i$ a function $\n_i$ is neutral for $\sr{g}$, there exists $p_i$ and $q_i$ such that for all $x \in J$ 
\begin{align}
f^{-1}(a_if(x))&=g^{-1}(p_i g(x)+q_i), \nonumber \\
g\circ f^{-1}(a_iy)&=p_ig\circ f^{-1}(y)+q_i, \nonumber \\
m(a_iy)&=p_im(y)+q_i \textrm{, where }m(y)=g\circ f^{-1}(y). \nonumber
\end{align}

Let us consider two subcases:
\subsubsection{Subcase $p\neq 1$} 
As in the proof of Lemma~\ref{lem:3sq} we have 
$$\frac{q_{i+1}}{p_{i+1}-1}=\frac{q_i}{p_i^{2/3}+p_i^{1/3}+1}\frac{1}{p_i^{1/3}-1}=\frac{q_i}{p_i-1}.$$
So $\frac{q_i}{p_i-1}=\frac{q}{p-1}$. Hence, we get step by step

\begin{align} 
m(a_iy)+\frac{q_i}{p_i-1}&=p_i\left(m(y)+\frac{q_i}{p_i-1}\right), \nonumber \\
m(a_iy)+\frac{q}{p-1}&=p_i\left(m(y)+\frac{q}{p-1}\right), \nonumber \\
m(y)+\frac{q}{p-1}&=\alpha y^\beta \textrm{, (by Lemma~\ref{lem:srnie})} \nonumber \\
m(y)&=\alpha y^\beta-\frac{q}{p-1}, \nonumber \\
g(x)&= \alpha f(x)^\beta-\frac{q}{p-1}, \nonumber \\
g(x)&\simeq f(x)^\beta. \nonumber
\end{align}
\subsubsection{Subcase $p=1$}
We obtain $p_i=1$ for an arbitrary $i$. So
\begin{align}
m(a_iy)&=q_i+m(y), \nonumber \\
\exp\left(m(a_iy)\right) &=\exp\left(m(y)\right)\exp(q_i),\nonumber \\\
\exp\left(m(y)\right)&=\alpha y^{\beta} \textrm{, (by Lemma~\ref{lem:srnie}),} \nonumber \\
m(y)&=\ln(\alpha)+\beta \ln y, \nonumber\\
g(x)&\simeq \ln f(x). \nonumber 
\end{align}
Concluding Case~\ref{subsec:case1} if $\n(x)=f^{-1}(af(x))$ and all its $f$-roots is a neutral function 
then the generator of mean is one of the family
$\{ f(x)^{\beta} \colon \beta \neq 0 \} \cup \{ \ln f(x) \}$.

\begin{xrem}
The second implication, for an arbitrary $a$ the function $f^{-1}(af(x))$
is neutral for each mean generated by the function belong to the family above, 
becomes very simple in view of~\eqref{eq:postacn}. 
\end{xrem}

Moreover one may add to $f$ an arbitrary real number (see the beginning of this case). 
Then, taking absolute value, we have families of generators of two types
\begin{align}
\{ (f(x)+q)^{\beta} \colon \beta \neq 0 \} &\cup \{ \ln (f(x)+q) \} \nonumber \\
&\, \textrm{, where } q \in \R, x \in \{x \in U \colon f(x)+q>0\}. \label{F11} \\
\{ (-f(x)+q)^{\beta} \colon \beta \neq 0 \} &\cup \{ \ln (-f(x)+q) \} \nonumber \\
&\, \textrm{, where }  q \in \R, x \in \{x \in U \colon -f(x)+q>0\}. \label{F12} 
\end{align}

\subsection{\label{subsec:case2}Case $a = 1$}
Then $\n(x)=f^{-1}(f(x)+b)$. Similar like in the above calculations let us take $\n_i(x)=f^{-1}(f(x)+b_i)$.
But $\n_i \neq \id$, so $b_i$ is a nonzero number. We get the following equalities
\begin{align}
f^{-1}(f(x)+b_i)&=g^{-1}(p_ig(x)+q_i), \nonumber \\
g\circ f^{-1}(y+b_i)&=p_ig\circ f^{-1}(y)+q_i, \nonumber \\
m(y+b_i)&=p_im(y)+q_i \textrm{, where }m(y)=g\circ f^{-1}(y). \nonumber
\end{align}

\noindent Let us consider two subcases:
\subsubsection{Subcase $p\neq 1$}
By the proof of Lemma~\ref{lem:3sq} we have 
$$\frac{q_{i+1}}{p_{i+1}-1}=\frac{q_i}{p_i^{2/3}+p_i^{1/3}+1}\frac{1}{p_i^{1/3}-1}=\frac{q_i}{p_i-1}.$$
So $\frac{q_i}{p_i-1}=\frac{q}{p-1}$ for all $i \in \mathbb{N}$. Hence, continuing
\begin{align} 
m(y+b_i)&=p_im(y)+q_i \nonumber \\
u(e^ye^{b_i})&=p_iu(e^y) \textrm{, where } m(y)=u(e^y)+\frac{q_i}{p_i-1} \nonumber \\
u(e^y)&=\alpha e^{\beta y}\textrm{ (by Lemma~\ref{lem:srnie})}\nonumber \\
m(y)&=\alpha e^{\beta y}+\frac{q}{p-1}, \nonumber \\
g(x)&=\alpha e^{\beta f(x)} +\frac{q}{p-1}, \nonumber \\
g(x)&\simeq e^{\beta f(x)}. \nonumber
\end{align}
\subsubsection{Subcase $p=1$}
We have $q_{i+1}=\frac{q_i}{3}$, $b_{i+1}=\frac{b_i}{3}$. So $\frac{q_i}{b_i}=\frac{q}{b}$
for all $i \in \mathbb{N}$ and $m(y+b_i)=m(y)+q_i$. Moreover, similarly like above,
$p_i=1$ for an arbitrary $i$. Hence
\begin{align}
m(y+b_i)&=m(y)+q_i, \nonumber \\
m(y)&=\alpha+\beta y \textrm{ (by the remark of Lemma~\ref{lem:srnie})} \nonumber \\
g(x)&=\alpha+\beta f(x) \nonumber \\
g(x)&\simeq f(x). \nonumber
\end{align}

Concluding Case~\ref{subsec:case2} we get the family
\begin{equation}
\{ e^{\beta f(x)} \colon \beta \neq 0 \} \cup \{ f(x) \}. \label{F2}
\end{equation}

Changing $f$ in an affine way does not change the means generated by above functions.
\begin{xrem}
Like in Case~\ref{subsec:case1} the second implication, 
for an arbitrary $b$ the function $f^{-1}(f(x)+b)$
is neutral for each mean generated by the function belong to the family~\eqref{F2}, 
becomes very simple in view of~\eqref{eq:postacn}. 
\end{xrem}

\subsection{Final conclusion and remarks}
To end the proof of Theorem~\ref{thm:invscale} it is enough to observe that
Lemma~\ref{lem:fam_likePM} implies each of the family 
\eqref{F11}, \eqref{F12} and \eqref{F2} to generate a scale on $I$.

\begin{xrem}
The functions considered in Case~\ref{subsec:case1} and Case~\ref{subsec:case2} are called \emph{scaling} and \emph{translation} respectively.
\end{xrem}

\begin{xrem}
The proof may be partitioned in the different way:
Cases may depends on $p$ and subcases on $a$. 
In this situation there is technical problem to conclude each case
like \eqref{F11}, \eqref{F12} and \eqref{F2} do.
\end{xrem}

Theorem~\ref{thm:invscale} might be inverse in the following sense
\begin{proposition}
Let $I$ be an interval and $f \colon I \rightarrow \R_+$ be a continuous, strictly monotone function.
Then there exists an $f$-neutral function $\n$ such that the family 
\begin{equation}
\{ f(x)^{\beta} \colon \beta \neq 0 \} \cup \{ \ln f(x) \}
\end{equation}
generates an unique scale invariant under $\n$ and all its $f$-roots.
\end{proposition}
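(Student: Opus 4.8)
The plan is to exhibit $\n$ explicitly as a \emph{scaling}, taking $\n(x)=f^{-1}\bigl(a\,f(x)\bigr)$ for a suitable constant $a>0$ with $a\neq1$. This is the natural choice because the computation carried out in Case~\ref{subsec:case1} shows that a neutral function of exactly this shape (that is, one with $b=0$ relative to the \emph{given} generator $f$) forces the means invariant under it and all its $f$-roots to be generated precisely by $\{f(x)^{\beta}\colon\beta\neq0\}\cup\{\ln f(x)\}$, with no surviving additive parameter. These are legitimate generators: since $f$ maps into $\R_{+}$, both $f(x)^{\beta}$ and $\ln f(x)$ are continuous and strictly monotone on $I$, and the family is literally the one to which Lemma~\ref{lem:fam_likePM} applies with $g=f$.

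First I would pin down $a$. Writing $f(I)=(m,M)\subset\R_{+}$, the formula $\n(x)=f^{-1}(af(x))$ defines a function on the open subinterval $U_{0}=\{x\in I\colon af(x)\in f(I)\}$, and $a\,f(I)\cap f(I)$ contains a nondegenerate interval as soon as $a$ lies close to $1$ (any $a\in(m/M,1)\cup(1,M/m)$ will do, with the usual conventions when $m=0$ or $M=\infty$). For such $a$ the map $\n$ is continuous and strictly monotone, hence $1$-$1$, and it has the form \eqref{eq:postacn} with $b=0$; by the remark following \eqref{eq:postacn} it is therefore $f$-neutral. Since $\n\circ\n(x)=f^{-1}(a^{2}f(x))$ and $a^{2}\neq1$, we have $\n\circ\n\neq\id$, so all hypotheses of Theorem~\ref{thm:invscale} hold. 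By Lemma~\ref{lem:3sq} together with \eqref{eq:3sq} (with $b=0$), the $f$-roots of $\n$ are again scalings $\n_{i}(x)=f^{-1}\bigl(a^{1/3^{i}}f(x)\bigr)$, tending to $\id$ as in Corollary~\ref{col:3sqrt}; and by the remark closing Case~\ref{subsec:case1} every mean generated by a member of $\{f^{\beta}\}\cup\{\ln f\}$ is invariant under each such scaling.

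I would then assemble the conclusion from three already-established facts. Lemma~\ref{lem:fam_likePM}, applied with $g=f$, shows that $\{f(x)^{\beta}\colon\beta\neq0\}\cup\{\ln f(x)\}$ generates a scale on $I$; the preceding paragraph shows that every mean of this scale is invariant under $\n$ and all its $f$-roots; and the characterization in Case~\ref{subsec:case1} is an equivalence, so \emph{conversely} any mean invariant under $\n$ and its $f$-roots is generated by some $f^{\beta}$ or by $\ln f$. Hence the set of all invariant means coincides with the scale generated by the displayed family, which proves existence. Uniqueness follows because this common set is, by Theorem~\ref{thm:invscale}, a scale and therefore a maximal comparable family: any further scale consisting of means invariant under $\n$ and its $f$-roots is contained in it and, by maximality, equal to it.

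I expect the one delicate point — the \emph{main obstacle} — to be not the algebra but the verification that choosing $b=0$ relative to the given $f$ really removes the extra parameter $q$ appearing in families \eqref{F11}--\eqref{F12}. That parameter records exactly the freedom of adding a constant to $f$, i.e.\ of varying the translation $b$ of the neutral function; fixing $\n=f^{-1}(a\,f(\cdot))$ forces $b=0$, hence $q=0$, which is precisely why the \emph{given} family is singled out rather than a one-parameter deformation of it. A secondary, purely technical issue is ensuring $U_{0}$ is nondegenerate, handled by the choice of $a$ above and the maximal-domain convention adopted for neutral functions.
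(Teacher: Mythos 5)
Your proposal is correct and follows essentially the same route as the paper's own proof: choose $\n(x)=f^{-1}\bigl(af(x)\bigr)$ with $a$ close to $1$ (the paper's ``$a\approx 1$'' to avoid domain degeneracy), observe that every mean generated by the displayed family is invariant under $\n$ and its $f$-roots, then get the scale property from Lemma~\ref{lem:fam_likePM} and uniqueness from Theorem~\ref{thm:invscale}. You merely make explicit several points the paper leaves implicit --- the admissible range of $a$, the verification via \eqref{eq:postacn} that $b=0$ kills the parameter $q$ of \eqref{F11}--\eqref{F12}, and the maximality argument behind uniqueness --- which is a welcome elaboration rather than a different method.
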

\begin{proof}
Let $\n(x)=f^{-1}(af(x))$ 
(for technical reason, to eliminate the problem with domain, we assume $a\approx 1$)
 and $\mathcal{F}$ denote given family of functions.

Each mean generate by a function belonging to $\mathcal{F}$
is invariant under $\n$ and all its $f$-roots.

Then, by Lemma~\ref{lem:fam_likePM}, $\mathcal{F}$ generates a scale
and, by Theorem~\ref{thm:invscale}, this scale is unique.
\end{proof}

\section{Applications}
The following proposition has been proved and reproved many times i.e. \cite[Theorem 5.1.]{WJ05}, \cite[Theorem 7.]{XL09}
\begin{proposition}
If for all $a \neq 0$ the mapping $x\mapsto x+a$ is neutral for a quasi-arithmetic mean, then it is one of the $\log-\exp$ mean (either arithmetic mean or the one generated by $e^{\beta x}$ for some $\beta \neq 0$).
\end{proposition}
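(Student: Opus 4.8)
The plan is to read the hypothesis as the translation instance of Theorem~\ref{thm:invscale}, taken with respect to the reference generator $f=\id$. Write the given mean as $\sr{h}$ for a continuous, strictly monotone $h\colon I\to\R$; the assumption is that $x\mapsto x+a$ is neutral for $\sr{h}$ for every $a\neq 0$, which by~\eqref{eq:postacn} says that $h(x+a)$ is an affine function of $h(x)$ for each fixed $a$.

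First I would justify the choice $f=\id$. The mean $\sr{\id}$ is the arithmetic mean, which is translation invariant, so every $x\mapsto x+a$ is $\id$-neutral; in the notation of~\eqref{eq:postacn} such a map has multiplier $1$, i.e.\ it falls under Case~\ref{subsec:case2}, and its square $x\mapsto x+2a$ differs from $\id$ because $a\neq 0$, so the non-degeneracy hypothesis $\n\circ\n\neq\id$ of Theorem~\ref{thm:invscale} holds. Fixing one base translation $\n\colon x\mapsto x+a_0$ with $a_0$ small, Lemma~\ref{lem:3sq} and Corollary~\ref{col:3sqrt} show that its $\id$-roots are again translations, explicitly the maps $x\mapsto x+a_0 3^{-i}\to\id$. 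Each of these is of the form $x\mapsto x+a$ with $a\neq 0$, hence neutral for $\sr{h}$ by hypothesis.

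Consequently $\sr{h}$ lies among the means invariant under $\n$ and all its $\id$-roots. Applying the Case~\ref{subsec:case2} computation with $f=\id$, every such mean is generated by a member of the family~\eqref{F2}, which for $f=\id$ reads $\{e^{\beta x}\colon\beta\neq 0\}\cup\{x\}$. Since an affine change of generator leaves a quasi-arithmetic mean unchanged, $h$ is affinely equivalent either to $x$, giving the arithmetic mean, or to $e^{\beta x}$ for some $\beta\neq 0$, giving the $\log$-$\exp$ mean; this is the assertion.

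I expect the only genuine obstacle to be the domain bookkeeping from the ``Domain'' subsection: when $I$ is bounded a fixed translation is defined only on a proper subinterval, so one must check that the roots $x\mapsto x+a_0 3^{-i}$, with domains $\{x\in I\colon x+a_0 3^{-i}\in I\}$, exhaust $I$ as $i\to\infty$. This is immediate, since $a_0 3^{-i}\to 0$ forces these subintervals to increase to all of $I$, and the local-to-global patching already present in the proof of Theorem~\ref{thm:invscale} then applies verbatim.
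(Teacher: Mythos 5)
Your proposal is correct and takes essentially the same approach as the paper: the paper's proof likewise observes that every translation is neutral for the arithmetic mean (i.e., takes $f=\id$), places the hypothesis in Case~\ref{subsec:case2}, and reads off the family \eqref{F2}, which for $f=\id$ is $\{e^{\beta x}\colon\beta\neq 0\}\cup\{x\}$. Your write-up simply makes explicit the details the paper leaves implicit — the check that $\n\circ\n\neq\id$, the identification of the $\id$-roots as the translations $x\mapsto x+a_03^{-i}$ covered by the hypothesis, and the domain bookkeeping.
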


\begin{proof}
It is easy to observe that the mapping is neutral for arithmetic mean, and it is considered in  Case~\ref{subsec:case2}. Hence all means invariant under $\n$ are generated by one of the functions in the family 
$\{ e^{\beta x} \colon \beta \neq 0 \} \cup \{ x \}$.
\end{proof}

Now we prove very classical result, which can be found for example in \cite[pp. 68--69]{HLP}, \cite[pp. 272--273]{bullen} and the references therein, \cite{XL09} (generalization for fuzzy sets and orness measure). 
\begin{proposition}
\label{prop:powermeans}
Power means are the only homogeneous, quasi-arithmetic means defined on $\R_{+}$.
\end{proposition}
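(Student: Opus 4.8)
The plan is to deduce Proposition~\ref{prop:powermeans} as a direct specialization of Theorem~\ref{thm:invscale}, combined with Example~\ref{ex:hom}. The key observation is that homogeneity is exactly the invariance condition studied in Example~\ref{ex:hom}: a mean $\sr{}$ on $\R_+$ is homogeneous if and only if it is invariant under the mapping $x \mapsto 2x$ and all its $\id$-roots. So if $\sr{f}$ is a homogeneous quasi-arithmetic mean, then $\sr{f}$ is invariant under $\n(x) = 2x$ and all its $f$-roots, where here $f = \id$ makes $\n$ precisely the scaling $x \mapsto 2x$.

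First I would set $U = U_0 = \R_+$ and take $f_0 = \id \colon \R_+ \to \R$, which is continuous and strictly monotone. The map $\n(x) = 2x = f_0^{-1}(2 f_0(x))$ is $f_0$-neutral with $a = 2 \neq 1$, so in particular $\n \circ \n \neq \id$, and the hypotheses of Theorem~\ref{thm:invscale} are met. By Example~\ref{ex:hom}, the homogeneous means are exactly those invariant under $\n$ and all its $\id$-roots. Applying Theorem~\ref{thm:invscale} (in the guise of Case~\ref{subsec:case1} with $b = 0$, since $\n(x) = f_0^{-1}(2 f_0(x))$ has no additive constant), every such invariant mean is generated by a function from the family $\{ x^\beta \colon \beta \neq 0 \} \cup \{ \ln x \}$. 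These are precisely the generators of the power means, which completes the identification.

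The one point that requires care — and which I expect to be the main obstacle to a clean argument — is the direction of the implication actually needed. Theorem~\ref{thm:invscale} produces the \emph{scale} of all invariant means, asserting the invariant family equals $\{ x^\beta \colon \beta \neq 0 \} \cup \{ \ln x \}$; what we want is that \emph{any} homogeneous quasi-arithmetic mean lies in this list. This follows because homogeneity forces invariance under $x \mapsto 2x$ and its $\id$-roots (the forward direction of Example~\ref{ex:hom}), so $\sr{f}$ must be among the means enumerated by the theorem. Conversely each listed mean is homogeneous, closing the equivalence. I would therefore organize the proof as: invoke Example~\ref{ex:hom} to translate homogeneity into invariance, invoke Theorem~\ref{thm:invscale} (Case~\ref{subsec:case1}) to enumerate the invariant generators, and finally note that $\{ x^\beta \colon \beta \neq 0 \} \cup \{ \ln x \}$ generates exactly the power means on $\R_+$, where the $\beta = 0$ limiting case corresponds to the geometric mean generated by $\ln x$.
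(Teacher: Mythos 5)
Your proof is correct, but it closes the argument by a genuinely different step than the paper does. Both you and the paper begin identically: Example~\ref{ex:hom} translates homogeneity into invariance under $x \mapsto 2x$ and all its $\id$-roots, and then Theorem~\ref{thm:invscale} is applied with $f = \id$, $\n(x) = 2x$ on $\R_+$. The difference lies in the final identification. The paper uses only the \emph{statement} of the theorem: the homogeneous quasi-arithmetic means form a scale on $\R_+$; since the power means also form a scale and every power mean is homogeneous, the two families must coincide --- if $\sr{g}$ were homogeneous but not a power mean, then at any fixed non-constant $(v,w)$ its value would be attained by some power mean $\mathcal{P}_t$, and the map from the homogeneous family to $(\min v, \max v)$ would fail to be injective, contradicting the scale property. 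You instead reach inside the \emph{proof} of the theorem, invoking the Case~\ref{subsec:case1} classification (here $a = 2 \neq 1$ and $b = 0$, so the normalization at the start of that case is vacuous) to enumerate the invariant generators as $\{x^\beta \colon \beta \neq 0\} \cup \{\ln x\}$ and to recognize these as precisely the power mean generators. Your route is more direct: it needs neither the classical fact that power means form a scale nor the nested-scales maximality argument. Its cost is that the generator classification is an intermediate result of Section~\ref{sec:invscale} rather than part of what Theorem~\ref{thm:invscale} literally asserts, so strictly speaking you are citing the theorem's proof, not the theorem; if one wanted a proof quotable from the statement alone, the paper's sandwich argument is the way to do it.
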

\begin{proof}
Homogeneous means are, equivalently, invariant under mapping $\R_{+} \ni x \mapsto 2x$ and all its roots
(see Example~\ref{ex:hom}). Hence, by Theorem~\ref{thm:invscale}, the set of all homogeneous,
quasi-arithmetic means is a scale on $\R_+$.
But powers means are a scale and, moreover, all power means are homogeneous.
Hence all homogeneous, quasi-arithmetic means are just power means.
\end{proof}

%%%%%%%%%%%%%%%%%%%%%%%%%%%%%%%%%%%%%%

\end{document}